\newtheorem{theorem}{Theorem}
\newtheorem{corollary}[theorem]{Corollary}
\newtheorem{lemma}[theorem]{Lemma}
\newcommand{\lpk}{{\rm lpk\,}}
\newcommand{\pk}{{\rm pk\,}}
\newcommand{\msn}{\mathfrak{S}_n}
\newcommand{\lrf}[1]{\lfloor #1\rfloor}
\newcommand{\arxiv}[1]{\href{http://arxiv.org/abs/#1}{\texttt{arXiv:#1}}}
\title[Central factorial numbers]{Alternating runs of permutations and the central factorial numbers}
\author[Q~Fang]{Qi Fang}
\address{School of Mathematics, Northeastern University, Shenyang 110004, P.R. China}
\email{qifangpapers@stumail.neu.edu.cn (Q~Fang)}
\author[Y.-N~Feng]{Ya-Nan Feng}
\address{School of Mathematics, Northeastern University, Shenyang 110004, P.R. China}
\email{ynfeng@stumail.neu.edu.cn (Y.-N~Feng)}
\author[S.-M.~Ma]{Shi-Mei~Ma}
\address{School of Mathematics and Statistics,
        Northeastern University at Qinhuangdao,
         Hebei 066004, P. R. China}
\email{shimeimapapers@163.com (S.-M. Ma)}
\subjclass[2010]{Primary 05A05; Secondary 33B10}
\begin{document}
\maketitle
\begin{abstract}
Let $R(n,k)$ be the number of permutations of $\{1,2,\ldots,n\}$ with $k$ alternating runs.
In this paper, we establish the relationships between $R(n,k)$ and the central factorial numbers of even indices as well as
the number of signed permutations with a given number of alternating runs and the central factorial numbers of odd indices.
The explicit formulas of the peak and left peak polynomials for permutations
and the derivative polynomials of the tangent and secant functions are also established.

\bigskip

\noindent{\sl Keywords}: Alternating runs; Eulerian polynomials; Peak polynomials; Derivative polynomials
\end{abstract}
\date{\today}


\section{Introduction}
Let $\msn$ be the symmetric group of all permutations of $[n]$, where $[n]=\{1,2,\ldots,n\}$.
Let $\pi=\pi(1)\pi(2)\cdots\pi(n)\in\msn$.
We say that $\pi$ changes
direction at position $i$ if either $\pi({i-1})<\pi(i)>\pi(i+1)$, or
$\pi(i-1)>\pi(i)<\pi(i+1)$, where $i\in\{2,3,\ldots,n-1\}$. We say that $\pi$ has $k$ {\it alternating
runs} if there are $k-1$ indices $i$ such that $\pi$ changes
direction at these positions. Let $R(n,k)$ denote the number of
permutations in $\msn$ with $k$ alternating runs.
The enumeration of finite sequences according to the number of alternating runs
has been studied extensively in the literature, starting with Andr\'e~\cite{Andre84} in 1884, restarting with Carlitz~\cite{Carlitz78,Carlitz80,Carlitz81}, and
Comtet~\cite[p.~260-261]{Comtet74} gave an exercise to this topic.
In the past decades, several authors~\cite{CW08,Ma12,Sta08} have considered
the explicit formulas of the numbers $R(n,k)$ as well as their generating functions.
The reader is referred to~\cite{Bona20,Ma2001,Zhuang16,Zhuang17} for the recent progress on this topic.

Following Andr\'e~\cite{Andre84}, the numbers $R(n,k)$ satisfy the recurrence relation
\begin{equation}\label{rnk-rr}
    R(n,k)=kR(n-1,k)+2R(n-1,k-1)+(n-k)R(n-1,k-2)
\end{equation}
for $n,k\geqslant 1$, where $R(1,0)=1$ and $R(1,k)=0$ for $k\geqslant 1$.
Let
$R_n(x)=\sum_{k=1}^{n-1}R(n,k)x^k$. It follows from~\eqref{rnk-rr} that
\begin{equation}\label{Rnx-recu}
R_{n+2}(x)=x(nx+2)R_{n+1}(x)+x\left(1-x^2\right)\frac{\mathrm{d}}{\mathrm{d}x}R_{n+1}(x).
\end{equation}
Below are the polynomials $R_n(x)$'s for $2\leqslant n\leqslant 5$:
\begin{align*}
  R_2(x)& =2x, \\
  R_3(x)& =2x+4x^2, \\
  R_4(x)& =2x+12x^2+10x^3,\\
  R_5(x)& =2x+28x^2+58x^3+32x^4.
\end{align*}

By the method of characteristics,
Carlitz~\cite{Carlitz78} proved that
\begin{equation}\label{CarlitzGF}
\sum_{n=0}^\infty (1-x^2)^{-n/2}\frac{z^n}{n!}\sum_{k=0}^nR(n+1,k)x^{n-k}=\frac{1-x}{1+x}\left(\frac{\sqrt{1-x^2}+\sin z}{x- \cos z}\right)^2.
\end{equation}
Subsequently, Carlitz~\cite[Eq.~(1.3)]{Carlitz80} deduced the following explicit formula of $R(n,k)$:
\begin{equation}\label{Carlitz}
\left\{
  \begin{array}{ll}
 R(2 n-1,2 n-s-2)=\sum_{j=1}^{n}(-1)^{n-j} 2^{-j+2}(2 j-1) ! U({n, j}) M(n,j,s), & \\
 R(2 n, 2 n-s-1)=\sum_{j=1}^{n}(-1)^{n-j} 2^{-j+1}(2 j) ! U({n, j}) M(n,j,s). &
  \end{array}
\right.
\end{equation}
where
$$U(n,j)=\frac{1}{(2j)!} \sum_{i=0}^{2j}(-1)^{i}\binom{2j}{i}(j-i)^{2n},~M(n,j,s)=\sum_{t=0}^{n-j}(-1)^{t}\binom{n-j}{t}\binom{n-2}{s-t}.$$
It should be noted that $U(n,j)$ are central factorial numbers of even indices.
In~\cite{Sta08}, Stanley gave another explict formula:
\begin{equation*}\label{Rnk-Stan}
R(n,k)=\sum_{i=0}^k\frac{1}{2^{i-1}}(-1)^{k-i}z_{k-i}\sum_{\substack{r+2m\leqslant i\\r\equiv i\bmod 2}}(-2)^m\binom{i-m}{(i+r)/2}\binom{n}{m}r^n,
\end{equation*}
where $z_0=2$ and $z_n=4$ for $n\geqslant 1$.
In~\cite{CW08}, Canfield and Wilf pointed out that there is something wrong in~\eqref{Carlitz}, and
showed that
$$R(n,k)=\frac{1}{2^{k-2}}k^n-\frac{1}{2^{k-4}}(k-1)^n+\psi_2(n,k)(k-2)^n+\cdots+\psi_{k-1}(n,k)\quad\text{for $n\geqslant 2$},$$
in which each $\psi_i(n,k)$ is a polynomial in $n$ whose degree in $n$ is $\lrf{i/2}$.
By expressing $R_n(x)$ in terms of the derivative polynomials of tangent function, Ma~\cite{Ma12} found that
\begin{equation*}
R(n,s)=\frac{1}{2^{n-1}}\sum_{k=0}^{\lfloor(n+1)/2\rfloor}p(n,n-2k+1)E(n,k,s),
\end{equation*}
where $$p(n,n-2k+1)=(-1)^{k}\sum_{i\geqslant 1}i!{n \brace i}(-2)^{n-i}\left\{\binom{i}{n-2k}-\binom{i}{n-2k+1}\right\},$$
$$E(n,k,s)=\sum_{j=0}^{\min (k,s)}(-1)^{k-j}\binom{n-k-1}{s-j}\binom{k}{j}.$$

The peak polynomials for permutations and derivative polynomials of
trigonometric functions appeared frequently in algebra, number theory, geometry and combinatorics,
see~\cite{Hoffman99,Ma123,Petersen07,Zhuang16,Zhuang17} and references therein.
In the next section, we first present a revision of~\eqref{Carlitz},
and then we provide explicit formulas for the peak and left peak polynomials as well as the derivative polynomials of the tangent and secant functions.
An explicit formula of the alternating run polynomials of signed permutations is also established.
From the results of this paper, one can see that sometimes enumerative polynomials naturally occur in pairs, and it is often much better to tackle both at the same time.
\section{Explicit formula in terms of central factorial numbers}\label{sec02}
The {\it central factorial numbers of the second kind} $T(n,k)$ are defined in Riordan's book~\cite[p.~213-217]{Riordan68}
by $$x^n=\sum_{k=0}^nT(n,k)x\prod_{i=1}^{k-1}\left(x+\frac{k}{2}-i\right).$$
As usual, we denote by $U(n,k)=T(2n,2k)$ and $V(n,k)=4^{n-k}T(2n+1,2k+1)$ for all $n,k\geqslant 0$.
By definitions, these numbers satisfy the recurrence relations
$$U(n,k)=U(n-1,k-1)+k^2U(n-1,k),$$
\begin{equation}\label{Vnk-recu}
V(n,k)=V(n-1,k-1)+(2k+1)^2V(n-1,k),
\end{equation}
with the initial conditions $U(1,1)=1$,~$U(1,k)=0$ if $k\neq1$, $V(0,0)=1$ and $V(0,k)=0$ if $k\neq0$, see~\cite{Garvan11,Zeng10,Kim22} for details.
The {\it central factorial numbers of even indices} $U(n,k)$ first appeared in a paper of MacMahon~\cite[p.~106]{MacMahon}, and they
can be defined by
$$x^n=\sum_{k=1}^nU(n,k)\prod_{i=1}^{k}\left(x-(i-1)^2\right).$$
The numbers $U(n,k)$ count partitions of the set $\{1,-1,2,-2,\ldots,n,-n\}$ with $k$ blocks $B_1,\ldots,B_k$ such that for each $j\in [k]$, if
$i$ is the least integer such that $i$ or $-i$ belongs to $B_j$, then $\{i,-i\}$ is a subset of $B_j$, see~\cite[Section~2.1]{Zeng10} for details.
According to~\cite[p.~214]{Riordan68}, we have
$$\sum_{n=0}^\infty\sum_{k=0}^nV(n,k)x^{2k+1}\frac{z^{2n+1}}{(2n+1)!}=\sinh(x\sinh(z))=xz+(x+x^3)\frac{z^3}{3!}+(x+10x^3+x^5)\frac{z^5}{5!}+\cdots.$$
Gelineau and Zeng~\cite[Theorem~8]{Zeng10} found that {\it the central factorial numbers of odd indices} $V(n,k)$ count
partitions of $[2n+1]$ into $2k+1$ blocks of odd cardinality. Explicitly, one has
$$V(n,k)=\frac{1}{(2k)!4^k}\sum_{m=0}^k(-1)^{k-m}\frac{(2m+1)^{2n+1}}{k+m+1}\binom{2k}{k+m}.$$
We refer the reader to~\cite[A036969,A160562]{Sloane} for various results on central factorial numbers.

As pointed out in~\cite{Kim22}, even though the central factorial numbers are less
known than Stirling numbers, they are as important as Stirling numbers.
In this paper, we give some applications of the central factorial numbers in the study of
the alternating run polynomials, peak and left peak polynomials as well as the derivative polynomials of tangent and secant functions.
\subsection{Main results}
\hspace*{\parindent}

The first main result of this paper is given as follows.
\begin{theorem}\label{Mainthm01}
For any $n\geqslant 1$, one has
\begin{equation}\label{Carlitz01}
\begin{split}
\frac{R_{2n-1}(x)}{(1+x)^{n-2}}&=\sum_{j=1}^{n}{2^{-j+2}(2j-1)!U(n,j)x^j(1-x)^{n-j}},  \\
\frac{R_{2n}(x)}{(1+x)^{n-1}}&=\sum_{j=1}^{n}{2^{-j+1}(2j)!U(n,j)x^j(1-x)^{n-j}}.
\end{split}
\end{equation}
Equivalently, we have
\begin{equation*}
\begin{split}
\sum_{s=0}^{2n-2} R(2n-1, s) x ^{2n-2-s}&=\sum_{j=1}^{n}{2^{-j+2}(2j-1)!U(n,j)(x-1)^{n-j}(x+1)^{n-2}},  \\
\sum_{s=0}^{2n-1} R(2n, s) x ^{2n-1-s}&=\sum_{j=1}^{n}{2^{-j+1}(2j)!U(n,j)(x-1)^{n-j}(x+1)^{n-1}}.
\end{split}
\end{equation*}
So we get
\begin{equation}\label{Carlitz02}
\left\{
  \begin{array}{ll}
 R(2 n-1,2 n-s-2)=\sum_{j=1}^{n}(-1)^{n-j} 2^{-j+2}(2 j-1) ! U({n, j}) M(n,j,s), & \\
 R(2 n, 2 n-s-1)=\sum_{j=1}^{n}(-1)^{n-j} 2^{-j+1}(2 j) ! U({n, j}) N(n,j,s), &
  \end{array}
\right.
\end{equation}
where $M(n,j,s)$ are the coefficients of $x^s$ in $(1+x)^{n-2}(1-x)^{n-j}$ and $N(n,j,s)$ are the coefficients of $x^s$ in $(1+x)^{n-1}(1-x)^{n-j}$.
\end{theorem}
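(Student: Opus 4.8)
The plan is to prove the two identities in \eqref{Carlitz01} simultaneously, by induction on $n$, playing the recurrence \eqref{Rnx-recu} for $R_n(x)$ off against the recurrence $U(n,k)=U(n-1,k-1)+k^2U(n-1,k)$ for the central factorial numbers. Put $c_j=2^{-j+2}(2j-1)!$ and $d_j=2^{-j+1}(2j)!$; since $(2j)!=2j\,(2j-1)!$ we have $d_j=j\,c_j$, and the two claims read $R_{2n-1}(x)=(1+x)^{n-2}S_n(x)$ and $R_{2n}(x)=(1+x)^{n-1}T_n(x)$, where, writing $m_j^{(n)}(x)=x^j(1-x)^{n-j}$, we set $S_n(x)=\sum_j c_jU(n,j)m_j^{(n)}(x)$ and $T_n(x)=\sum_j d_jU(n,j)m_j^{(n)}(x)$. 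The single computational input is the logarithmic-derivative identity $\bigl(m_j^{(n)}\bigr)'(x)=\frac{j-nx}{x(1-x)}\,m_j^{(n)}(x)$, equivalently $x(1-x^2)\bigl(m_j^{(n)}\bigr)'(x)=(1+x)(j-nx)\,m_j^{(n)}(x)$.

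The base case is a direct check at $n=2$ using $U(2,1)=U(2,2)=1$ (the remaining instance $R_2(x)=2x$ of the second identity being immediate). For the inductive step I first pass from the odd identity at level $n$ to the even identity at level $n$. Assuming $R_{2n-1}(x)=(1+x)^{n-2}S_n(x)$ and substituting into \eqref{Rnx-recu} with recurrence index $2n-2$, namely $R_{2n}(x)=x\bigl(2(n-1)x+2\bigr)R_{2n-1}(x)+x(1-x^2)R_{2n-1}'(x)$, the factor $(1+x)^{n-3}$ produced by differentiating $(1+x)^{n-2}$ is absorbed into $x(1-x^2)=x(1-x)(1+x)$ and the scalar coefficient simplifies by $2(n-1)x+2+(n-2)(1-x)=n(1+x)$, leaving $R_{2n}(x)=(1+x)^{n-2}x\bigl[n(1+x)S_n(x)+(1-x^2)S_n'(x)\bigr]$. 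Applying the derivative identity termwise, $x\bigl[n(1+x)m_j^{(n)}+(1-x^2)\bigl(m_j^{(n)}\bigr)'\bigr]=(1+x)\,j\,m_j^{(n)}$, so $R_{2n}(x)=(1+x)^{n-1}\sum_j j\,c_jU(n,j)m_j^{(n)}(x)=(1+x)^{n-1}T_n(x)$. This half is purely algebraic and uses nothing about $U$.

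The heart of the argument is the passage from the even identity at level $n$ to the odd identity at level $n+1$, which is the only place the $U$-recurrence enters. Substituting $R_{2n}(x)=(1+x)^{n-1}T_n(x)$ into \eqref{Rnx-recu} with recurrence index $2n-1$ and carrying out the same manipulation (the scalar now simplifying by $(2n-1)x+2+(n-1)(1-x)=nx+(n+1)$) gives $R_{2n+1}(x)=(1+x)^{n-1}x\bigl[(nx+n+1)T_n(x)+(1-x^2)T_n'(x)\bigr]$, and then the termwise identity $x\bigl[(nx+n+1)m_j^{(n)}+(1-x^2)\bigl(m_j^{(n)}\bigr)'\bigr]=\bigl((j+1)x+j\bigr)m_j^{(n)}$ yields $R_{2n+1}(x)=(1+x)^{n-1}\sum_j d_jU(n,j)\bigl((j+1)x+j\bigr)m_j^{(n)}(x)$. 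The delicate point is that $m_j^{(n)}$ has the level-$n$ shape whereas the target $S_{n+1}(x)=\sum_i c_iU(n+1,i)m_i^{(n+1)}(x)$ lives at level $n+1$; I bridge this with the identity $x^j(1-x)^{n-j}=x^j(1-x)^{n+1-j}+x^{j+1}(1-x)^{n-j}$, i.e.\ $m_j^{(n)}=m_j^{(n+1)}+m_{j+1}^{(n+1)}$, which turns $\bigl((j+1)x+j\bigr)m_j^{(n)}$ into $(2j+1)m_{j+1}^{(n+1)}+j\,m_j^{(n+1)}$. Reindexing, the coefficient of $m_i^{(n+1)}$ in $R_{2n+1}(x)/(1+x)^{n-1}$ is $(2i-1)d_{i-1}U(n,i-1)+i\,d_iU(n,i)$; the factorial identity $(2i-1)d_{i-1}=(2i-1)(i-1)c_{i-1}=c_i$ (immediate from $(2i-1)(2i-2)=2(2i-1)(i-1)$) and $i\,d_i=i^2c_i$ collapse this to $c_i\bigl(U(n,i-1)+i^2U(n,i)\bigr)=c_iU(n+1,i)$. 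Hence $R_{2n+1}(x)=(1+x)^{n-1}S_{n+1}(x)$, the odd identity at level $n+1$, which closes the induction and establishes \eqref{Carlitz01}.

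Finally, the ``equivalently'' reformulations follow from \eqref{Carlitz01} by substituting $x\mapsto 1/x$ and multiplying by $x^{2n-2}$ (respectively $x^{2n-1}$), i.e.\ by reversing the polynomials $R_{2n-1}$ and $R_{2n}$: the net power of $x$ is zero, $(1+x)^{n-2}$ becomes $(x+1)^{n-2}$, and each factor $1-x$ becomes $x-1$. Reading off the coefficient of $x^s$ on both sides and using $(x-1)^{n-j}=(-1)^{n-j}(1-x)^{n-j}$ to extract the sign then gives \eqref{Carlitz02}, with $M(n,j,s)$ and $N(n,j,s)$ the indicated binomial convolutions. I expect the only genuinely delicate step to be the level-shift bookkeeping in the even-to-odd passage --- rewriting $\bigl((j+1)x+j\bigr)x^j(1-x)^{n-j}$ in the basis $\{x^i(1-x)^{n+1-i}\}$ and reindexing so that the $U$-recurrence surfaces --- together with the small factorial identity $(2i-1)(i-1)c_{i-1}=c_i$; everything else is a mechanical expansion once the derivative identity for $m_j^{(n)}$ is recorded. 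One could alternatively derive \eqref{Carlitz01} from Carlitz's generating function \eqref{CarlitzGF} by rewriting its right-hand side through $\sum_n U(n,k)z^{2n}/(2n)!=(2\sinh(z/2))^{2k}/(2k)!$, but the inductive route above is more direct and self-contained.
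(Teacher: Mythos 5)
Your proposal is correct, but it takes a genuinely different route from the paper. The paper proves \eqref{Carlitz01} analytically, following Carlitz: starting from the generating function \eqref{CarlitzGF}, substituting $x=\cos 2\theta$ to obtain the identity \eqref{calitzcsc} expressing the reversed alternating-run polynomial as $\tan^2\theta\,\frac{\mathrm{d}^n}{\mathrm{d}\theta^n}\csc^2\theta$, then invoking Carlitz's closed forms for the even and odd derivatives of $\csc^2\theta$ in terms of $U(n,j)$ (Lemma~\ref{Lemma80}) and converting back with double-angle formulas. You instead argue by induction on $n$ directly from the polynomial recurrence \eqref{Rnx-recu}, interleaving the odd- and even-indexed identities, with the central factorial recurrence $U(n,k)=U(n-1,k-1)+k^2U(n-1,k)$ entering only in the even-to-odd step. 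I checked the load-bearing computations --- the scalar simplifications $2(n-1)x+2+(n-2)(1-x)=n(1+x)$ and $(2n-1)x+2+(n-1)(1-x)=nx+n+1$, the termwise identities $x\bigl[n(1+x)m_j^{(n)}+(1-x^2)(m_j^{(n)})'\bigr]=(1+x)jm_j^{(n)}$ and $x\bigl[(nx+n+1)m_j^{(n)}+(1-x^2)(m_j^{(n)})'\bigr]=\bigl((j+1)x+j\bigr)m_j^{(n)}$ for $m_j^{(n)}(x)=x^j(1-x)^{n-j}$, the basis shift $m_j^{(n)}=m_j^{(n+1)}+m_{j+1}^{(n+1)}$, and the coefficient identities $(2i-1)d_{i-1}=c_i$, $i\,d_i=i^2c_i$ --- and they all hold, as does the routine reversal argument for the ``equivalently'' part and \eqref{Carlitz02}. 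What your route buys is an elementary, self-contained proof needing neither \eqref{CarlitzGF} nor the trigonometric calculus of Lemma~\ref{Lemma80}, and it isolates exactly where the $U$-recurrence is used; what the paper's route buys is an explanation of where the formula comes from (the $U(n,j)$ arise as the natural coefficients in the higher derivatives of $\csc^2\theta$), continuity with Carlitz's original argument that the theorem is correcting, and the reversed form directly. One minor remark: your induction yields the odd-indexed identity for $n\geqslant 2$ and the even-indexed one for $n\geqslant 1$, which is precisely the range where the statement is non-degenerate, since for $n=1$ the first identity involves the factor $(1+x)^{-1}$.
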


The proof of Theorem~\ref{Mainthm01} will be given in subsection~\ref{proof}.
The following result is immediate.
\begin{corollary}[\cite{BE00}]\label{Bona}
The polynomial $R_n(x)$ is divisible by $(x+1)^{\lfloor n/2\rfloor-1}$ for any $n\geqslant 2$.
\end{corollary}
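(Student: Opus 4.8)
The plan is to read the divisibility directly off Theorem~\ref{Mainthm01}, which does all the real work. The content of the two identities in~\eqref{Carlitz01} is that, once $R_{2n-1}(x)$ is divided by $(1+x)^{n-2}$ and $R_{2n}(x)$ by $(1+x)^{n-1}$, the right-hand side is manifestly a polynomial in $x$: it is a finite $\mathbb{Q}$-linear combination of the monomials $x^j(1-x)^{n-j}$. Hence $(1+x)^{n-2}$ divides $R_{2n-1}(x)$ and $(1+x)^{n-1}$ divides $R_{2n}(x)$ in $\mathbb{Q}[x]$; since $1+x$ is monic and each $R_m(x)$ lies in $\mathbb{Z}[x]$, polynomial division keeps us in $\mathbb{Z}[x]$, so the divisibility also holds there.

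It then remains only to match the exponents with $\lfloor n/2\rfloor-1$, which I would do by splitting into parities. If $m=2n-1$ with $n\geqslant 2$, then $\lfloor m/2\rfloor-1=(n-1)-1=n-2$, exactly the exponent furnished by the first line of~\eqref{Carlitz01}; note that the range $n\geqslant 2$ is precisely the range $m\geqslant 3$ of odd $m\geqslant 2$, and when $n=2$ the claimed exponent is $0$, so the statement is vacuous there. If $m=2n$ with $n\geqslant 1$, then $\lfloor m/2\rfloor-1=n-1$, the exponent in the second line of~\eqref{Carlitz01}. Combining the two cases yields $(x+1)^{\lfloor n/2\rfloor-1}\mid R_n(x)$ for every $n\geqslant 2$.

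There is essentially no obstacle here beyond Theorem~\ref{Mainthm01} itself: the corollary is obtained by forgetting the explicit shape of the quotient $\sum_{j}2^{-j+2}(2j-1)!\,U(n,j)x^j(1-x)^{n-j}$ (resp. $\sum_{j}2^{-j+1}(2j)!\,U(n,j)x^j(1-x)^{n-j}$) and retaining only that it lies in $\mathbb{Q}[x]$. The one point worth double-checking is that~\eqref{Carlitz01} is indeed asserted for all $n\geqslant 1$, so that no small instance of the corollary is missed; a glance at the listed polynomials (e.g. $R_4(x)=2x(1+x)(1+5x)$ and $R_5(-1)=-2+28-58+32=0$) confirms the boundary cases.
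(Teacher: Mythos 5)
Your argument is correct and is exactly how the paper obtains this corollary: it is read off immediately from Theorem~\ref{Mainthm01}, since the right-hand sides of~\eqref{Carlitz01} are polynomials, and the parity check $\lfloor(2n-1)/2\rfloor-1=n-2$, $\lfloor 2n/2\rfloor-1=n-1$ matches the exponents. Nothing further is needed.
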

Recently, B\'ona~\cite{Bona20} gave a proof of Corollary~\ref{Bona} by introducing a group action on permutations.

Let $\pi=\pi(1)\pi(2)\cdots\pi(n)\in\msn$.
An {\it interior peak} of $\pi$ is an index $i\in\{2,3,\ldots,n-1\}$ such that $\pi(i-1)<\pi(i)>\pi(i+1)$. Let $\pk(\pi)$ denote the number of interior peaks of $\pi$.
The classical {\it peak polynomials} are defined by
\begin{equation*}
P_n(x)=\sum_{\pi\in\msn}x^{\pk(\pi)},
\end{equation*}
which have been extensively studied in the past decades, see~\cite{Ma123,Petersen07,Zhuang17} and references therein.
There is a close connection between $R_n(x)$ and $P_n(x)$ (see~\cite[Corollary 2]{Ma123}):
\begin{equation}\label{Rnx-Pnx}
R_n(x)=\frac{x(1+x)^{n-2}}{2^{n-2}}P_n\left(\frac{2x}{1+x}\right)~\text{for $n\geqslant 2$}.
\end{equation}
Combining~\eqref{Carlitz01} and~\eqref{Rnx-Pnx}, we get the following result.
\begin{corollary}\label{Carlitz022}
For any $n\geqslant 1$, we have
\begin{equation}\label{Carlitz02}
\begin{split}
xP_{2n-1}(x)&=\sum_{j=1}^{n}{2^{2n-2j}(2j-1)!U(n,j)x^j(1-x)^{n-j}},  \\
xP_{2n}(x)&=\sum_{j=1}^{n}{2^{2n-2j}(2j)!U(n,j)x^j(1-x)^{n-j}}.
\end{split}
\end{equation}
\end{corollary}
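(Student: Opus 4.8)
The plan is to derive both identities by feeding the change of variables underlying \eqref{Rnx-Pnx} directly into the formulas \eqref{Carlitz01} of Theorem~\ref{Mainthm01}. First I would invert the substitution in \eqref{Rnx-Pnx}: writing $y=\frac{2x}{1+x}$ and solving for $x$ yields
\[
x=\frac{y}{2-y},\qquad 1+x=\frac{2}{2-y},\qquad 1-x=\frac{2(1-y)}{2-y}.
\]
Since $x\mapsto\frac{2x}{1+x}$ and $y\mapsto\frac{y}{2-y}$ are mutually inverse rational maps, \eqref{Rnx-Pnx} can be read as an identity in the field $\mathbb{Q}(y)$ after the replacement $x\mapsto\frac{y}{2-y}$.

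Next I would rewrite the left-hand sides of \eqref{Carlitz01} using \eqref{Rnx-Pnx}. Replacing $n$ by $2n-1$ there and dividing by $(1+x)^{n-2}$ gives
\[
\frac{R_{2n-1}(x)}{(1+x)^{n-2}}=\frac{x(1+x)^{n-1}}{2^{2n-3}}\,P_{2n-1}\!\left(\frac{2x}{1+x}\right),
\]
and likewise $\frac{R_{2n}(x)}{(1+x)^{n-1}}=\frac{x(1+x)^{n-1}}{2^{2n-2}}\,P_{2n}\!\left(\frac{2x}{1+x}\right)$. Substituting $x=\frac{y}{2-y}$ and using $x(1+x)^{n-1}=\frac{2^{n-1}y}{(2-y)^{n}}$, the two left-hand sides become $\frac{y}{2^{n-2}(2-y)^{n}}P_{2n-1}(y)$ and $\frac{y}{2^{n-1}(2-y)^{n}}P_{2n}(y)$. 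On the right-hand sides of \eqref{Carlitz01}, each summand $x^{j}(1-x)^{n-j}$ becomes $\frac{2^{n-j}y^{j}(1-y)^{n-j}}{(2-y)^{n}}$, so the factor $(2-y)^{-n}$ is common to both sides and cancels. Clearing it and collecting the powers of $2$ (using $2^{n-2}\cdot 2^{\,n-2j+2}=2^{\,2n-2j}$ in the first identity and $2^{n-1}\cdot 2^{\,n-2j+1}=2^{\,2n-2j}$ in the second) produces exactly the asserted formulas for $xP_{2n-1}(x)$ and $xP_{2n}(x)$.

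The only point that merits care is the passage from an identity in $\mathbb{Q}(y)$ back to one of polynomials. Both $R_{2n-1}(x)/(1+x)^{n-2}$ and $R_{2n}(x)/(1+x)^{n-1}$ are polynomials of degree at most $n$, as one sees on the right of \eqref{Carlitz01}, so their pullbacks along $x=\frac{y}{2-y}$ have at worst a pole of order $n$ at $y=2$, which is exactly cancelled by the $(2-y)^{n}$ arising from $(1+x)^{n-1}$; hence each side of each claimed identity is a genuine polynomial in $y$, and equality of rational functions forces equality of these polynomials. Equivalently, the two sides agree for all $y\in(0,1)$ and so coincide identically. There is essentially no obstacle here: the entire combinatorial content sits in Theorem~\ref{Mainthm01}, and this corollary is a bookkeeping exercise in the substitution relating $x$ and $\frac{2x}{1+x}$.
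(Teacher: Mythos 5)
Your proposal is correct and is essentially the paper's own argument: the paper proves the corollary exactly by combining the formulas \eqref{Carlitz01} of Theorem~\ref{Mainthm01} with the relation \eqref{Rnx-Pnx}, and your computation merely spells out the inversion $x\mapsto y/(2-y)$ and the power-of-$2$ bookkeeping, all of which checks out. The only cosmetic caveat is the boundary case $2n-1=1$, where \eqref{Rnx-Pnx} is stated only for $n\geqslant 2$, so the identity $xP_1(x)=x$ should be verified directly; this is a triviality the paper itself glosses over.
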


Let $\pi=\pi(1)\pi(2)\cdots\pi(n)\in\msn$.
A {\it left peak} of $\pi$ is an index $i\in[n-1]$ such that $\pi(i-1)<\pi(i)>\pi(i+1)$,
where we take $\pi(0)=0$. Let $\lpk(\pi)$ denote {\it the number of left peaks} of $\pi$.
The {\it left peak polynomials} are defined by
$$ {\widehat{P}}_n(x)=\sum_{\pi\in\msn}x^{\lpk(\pi)}.$$
It is well known that the polynomials $\widehat{P}_n(x)$ satisfy the following recurrence relation
\begin{equation}\label{Wnxleftpeak-recu}
\widehat{P}_{n+1}(x)=(nx+1)\widehat{P}_{n}(x)+2x(1-x)\frac{\mathrm{d}}{\mathrm{d}x}\widehat{P}_{n}(x),
\end{equation}
with initial conditions $\widehat{P}_{1}(x)=1$ and $\widehat{P}_{2}(x)=1+x$, see~\cite[A008971]{Sloane} and references therein.
As a dual of Corollary~\ref{Carlitz022}, we can now present the second main result of this paper.
\begin{theorem}\label{thm02}
For any $n\geqslant 1$, we have
\begin{equation}\label{PP}
\begin{split}
\widehat{P}_{2n}(x)&=\sum_{j=0}^{n}{(2j)!V(n,j)x^j(1-x)^{n-j}},\\
\widehat{P}_{2n+1}(x)&=\sum_{j=0}^{n}{(2j+1)!V(n,j)x^j(1-x)^{n-j}}.
\end{split}
\end{equation}
\end{theorem}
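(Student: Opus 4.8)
The plan is to prove the two identities in \eqref{PP} simultaneously by induction on $n$, playing the recurrence \eqref{Wnxleftpeak-recu} for $\widehat{P}_n(x)$ against the recurrence \eqref{Vnk-recu} for $V(n,k)$. Let
\begin{equation*}
A_n(x)=\sum_{j=0}^{n}(2j)!\,V(n,j)\,x^j(1-x)^{n-j},\qquad
B_n(x)=\sum_{j=0}^{n}(2j+1)!\,V(n,j)\,x^j(1-x)^{n-j},
\end{equation*}
and write $L_m$ for the operator $L_m f=(mx+1)f+2x(1-x)\frac{\mathrm{d}}{\mathrm{d}x}f$, so that \eqref{Wnxleftpeak-recu} reads $\widehat{P}_{m+1}=L_m\widehat{P}_m$. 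Since $\widehat{P}_{2n+1}=L_{2n}\widehat{P}_{2n}$ and $\widehat{P}_{2n+2}=L_{2n+1}\widehat{P}_{2n+1}$, it suffices to establish the two operator identities
\begin{equation*}
L_{2n}A_n=B_n\qquad\text{and}\qquad L_{2n+1}B_n=A_{n+1}\qquad(n\geqslant0),
\end{equation*}
because then, starting from the trivially checked base case $\widehat{P}_1=1=B_0$, the induction alternates $\widehat{P}_1=B_0\Rightarrow\widehat{P}_2=A_1\Rightarrow\widehat{P}_3=B_1\Rightarrow\cdots$.

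The first identity is routine and involves no change of degree. I would differentiate $A_n$ termwise, multiply by $2x(1-x)$, and add $(2nx+1)A_n$. The $x^{j+1}(1-x)^{n-j}$ terms carry total coefficient $2n-2(n-j)=2j$; together with the $2j\,x^j(1-x)^{n-j+1}$ coming from the derivative they combine through $(1-x)+x=1$ into $2j\,x^j(1-x)^{n-j}$, and adding the $x^j(1-x)^{n-j}$ contributed by $1\cdot A_n$ produces $(2j+1)x^j(1-x)^{n-j}$. Hence $L_{2n}A_n=\sum_j(2j+1)(2j)!\,V(n,j)\,x^j(1-x)^{n-j}=B_n$.

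The second identity is the heart of the matter. The same manipulation, after using $(1-x)^{n-j+1}=(1-x)^{n-j}-x(1-x)^{n-j}$ to absorb the surplus $x^{j+1}(1-x)^{n-j}$ terms, gives
\begin{equation*}
L_{2n+1}B_n=\sum_{j}(2j+1)!\,V(n,j)\left[(2j+1)x^j(1-x)^{n-j}+x^{j+1}(1-x)^{n-j}\right].
\end{equation*}
One must now pass from the degree-$n$ expressions $x^j(1-x)^{n-j}$ to the degree-$(n+1)$ expressions $x^j(1-x)^{n+1-j}$ appearing in $A_{n+1}$: insert $1=x+(1-x)$ into each $x^j(1-x)^{n-j}$ and shift the index $j\mapsto j-1$ in every resulting $x^{j+1}(1-x)^{n-j}$ piece. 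Using $(2j+1)!(2j+2)=(2j+2)!$ and $(2j+1)!(2j+1)=(2j+1)^2(2j)!$, the coefficient of $x^j(1-x)^{n+1-j}$ becomes $(2j)!\,V(n,j-1)+(2j+1)^2(2j)!\,V(n,j)$, which equals $(2j)!\,V(n+1,j)$ by \eqref{Vnk-recu}; the boundary values $j=0$ and $j=n+1$ cause no trouble since $V(n,-1)=0$ and \eqref{Vnk-recu} already forces $V(n+1,0)=V(n,0)$ and $V(n+1,n+1)=V(n,n)$. Thus $L_{2n+1}B_n=A_{n+1}$, which closes the induction. The only real difficulty I anticipate is bookkeeping---keeping the two shifted families $\{x^j(1-x)^{n-j}\}$ and $\{x^j(1-x)^{n+1-j}\}$ straight in this last step---rather than anything conceptual; an alternative I would mention but not pursue is to verify \eqref{PP} at the level of exponential generating functions, feeding $\sinh(x\sinh z)=\sum_{n,k}V(n,k)x^{2k+1}\frac{z^{2n+1}}{(2n+1)!}$ together with the known closed form of $\sum_n\widehat{P}_n(x)\frac{z^n}{n!}$ into a suitable substitution.
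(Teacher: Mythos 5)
Your proof is correct and follows essentially the same route as the paper: induction via the recurrence \eqref{Wnxleftpeak-recu}, with the key step being exactly the coefficient identity $(2j)!\,V(n,j-1)+(2j+1)^2(2j)!\,V(n,j)=(2j)!\,V(n+1,j)$ supplied by \eqref{Vnk-recu}. The only cosmetic difference is that you package the induction as the operator identities $L_{2n}A_n=B_n$ and $L_{2n+1}B_n=A_{n+1}$ starting from $\widehat{P}_1=B_0$, whereas the paper checks $n=1,2$ by hand and then carries out the same two computations directly.
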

\begin{proof}
Note that $$\widehat{P}_{2}(x)=1+x=(1-x)+2x,~\widehat{P}_{3}(x)=1+5x=(1-x)+6x,$$
$$\widehat{P}_{4}(x)=1+18x+5x^2=(1-x)^2+20x(1-x)+24x^2,$$
$$\widehat{P}_{5}(x)=1+58x+61x^2=(1-x)^2+60x(1-x)+120x^2.$$
Thus the two expressions in~\eqref{PP} hold for $n=1,2$. We proceed by induction. For $m\geqslant 2$,
assume that the result holds for $n=m$. It follows from~\eqref{Wnxleftpeak-recu} that
\begin{equation*}
\begin{split}
\widehat{P}_{2m+2}(x)&=(2mx+x+1)\widehat{P}_{2m+1}(x)+2x(1-x)\frac{\mathrm{d}}{\mathrm{d}x}\widehat{P}_{2m+1}(x)\\
&=\left((2mx+1)+x\right)\sum_{j=0}^{m}{(2j+1)!V(m,j)x^j(1-x)^{m-j}}+\\
&2\sum_{j=0}^{m}(2j+1)!jV(m,j)x^j(1-x)^{m-j+1}-2\sum_{j=0}^{m}(2j+1)!(m-j)V(m,j)x^{j+1}(1-x)^{m-j}\\
&=\sum_{j=0}^{m}{(2j+1)!V(m,j)x^{j+1}(1-x)^{m-j}}+\sum_{j=0}^{m}{(2j+1)!(2j+1)V(m,j)x^j(1-x)^{m-j}},
\end{split}
\end{equation*}
where the last equality follows from the fact that $2mx+1+2j(1-x)-2(m-j)x=2j+1$.
Using $1=(1-x)+x$, we obtain
\begin{equation*}
\begin{split}
\widehat{P}_{2m+2}(x)&=\sum_{j=0}^{m}{(2j+2)!V(m,j)x^{j+1}(1-x)^{m-j}}+\sum_{j=0}^{m}{(2j+1)!(2j+1)V(m,j)x^{j}(1-x)^{m-j+1}}.
\end{split}
\end{equation*}
It follows from~\eqref{Vnk-recu} that the
coefficient of $x^j(1-x)^{m-j+1}$ on the right-hand side is $(2j)!V(m+1,j)$.
By the recurrence relation~\eqref{Wnxleftpeak-recu}, we have
\begin{equation*}
\begin{split}
\widehat{P}_{2m+3}(x)&=(2mx+2x+1)\widehat{P}_{2m+2}(x)+2x(1-x)\frac{\mathrm{d}}{\mathrm{d}x}\widehat{P}_{2m+2}(x)\\
&=(2mx+2x+1)\sum_{j=0}^{m+1}{(2j)!V(m+1,j)x^j(1-x)^{m+1-j}}+\\
&2(1-x)\sum_{j=0}^{m+1}{(2j)!jV(m+1,j)x^j(1-x)^{m+1-j}}-\\
&2x\sum_{j=0}^{m+1}{(2j)!(m+1-j)V(m+1,j)x^j(1-x)^{m+1-j}}.
\end{split}
\end{equation*}
The coefficient of $x^j(1-x)^{m+1-j}$ on the right-hand side is $(2j+1)!V(m+1,j)$, since
$$2mx+2x+1+2j(1-x)-2x(m+1-j)=2j+1.$$
Therefore, the two expressions in~\eqref{PP} hold for $n=m+1$. This completes the proof.
\end{proof}

Denote by $B_n$ the hyperoctahedral group of rank $n$. Elements $\pi$ of $B_n$ are signed permutations of the set $\pm[n]$ such that $\pi(-i)=-\pi(i)$ for all $i$, where $\pm[n]=\{\pm1,\pm2,\ldots,\pm n\}$. As usual, we identify a signed permutation $\pi=\pi(1)\cdots\pi(n)$ with the word $\pi(0)\pi(1)\cdots\pi(n)$, where $\pi(0)=0$.
A {\it run} of a signed permutation $\pi$ is defined as a maximal interval of consecutive elements on which the elements of $\pi$ are monotonic in the order $$\cdots<\overline{2}<\overline{1}<0<1<2<\cdots.$$
The {\it up signed permutations} are signed permutations with $\pi(1)> 0$.
Let $\widehat{R}(n,k)$ denote the number of up signed permutations in $B_n$ with $k$ alternating runs and let $\widehat{R}_n(x)=\sum_{k=1}^n\widehat{R}(n,k)x^k$.
Following~\cite[Theorem~4]{Chow14}, we have
\begin{equation*}\label{Tnx-Pnx}
\widehat{R}_n(x)=x(1+x)^{n-1}\widehat{P}_n\left(\frac{2x}{1+x}\right)
\end{equation*}
for $n\geqslant 1$. Combining this with Theorem~\ref{thm02}, we obtain the following result.
\begin{corollary}
For $n\geqslant 1$, we have
\begin{equation*}
\begin{split}
\widehat{R}_{2n}(x)&=x(1+x)^{n-1}\sum_{j=0}^{n}{2^j(2j)!V(n,j)x^j(1-x)^{n-j}},\\
\widehat{R}_{2n+1}(x)&=x(1+x)^n\sum_{j=0}^{n}2^j{(2j+1)!V(n,j)x^j(1-x)^{n-j}}.
\end{split}
\end{equation*}
\end{corollary}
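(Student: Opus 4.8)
The plan is to substitute the explicit formulas for $\widehat{P}_{2n}(x)$ and $\widehat{P}_{2n+1}(x)$ from Theorem~\ref{thm02} directly into the identity $\widehat{R}_n(x)=x(1+x)^{n-1}\widehat{P}_n\left(\frac{2x}{1+x}\right)$ taken from~\cite[Theorem~4]{Chow14}, the only real bookkeeping being the powers of $1+x$. In fact, since the two ingredients are already in hand, this is meant to be an immediate computation.

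First I would record the elementary substitution rule: setting $y=\frac{2x}{1+x}$ gives $1-y=\frac{1-x}{1+x}$, hence
$$y^j(1-y)^{m-j}=\frac{2^jx^j(1-x)^{m-j}}{(1+x)^m}$$
for every $j$ and $m$. Applying this termwise to the formula of Theorem~\ref{thm02} with $m=2n$ gives
$$\widehat{P}_{2n}\!\left(\frac{2x}{1+x}\right)=\frac{1}{(1+x)^n}\sum_{j=0}^n 2^j(2j)!\,V(n,j)\,x^j(1-x)^{n-j},$$
where the denominator exponent is $n$ because each summand has total degree $n$ in $y$ and $1-y$. Multiplying through by $x(1+x)^{2n-1}$, the prefactor in the cited identity for the index $2n$, collapses $(1+x)^{2n-1}/(1+x)^n$ to $(1+x)^{n-1}$ and yields the first displayed formula. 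The odd case is identical: with $m=2n+1$ the same substitution produces $\widehat{P}_{2n+1}\!\left(\frac{2x}{1+x}\right)=(1+x)^{-n}\sum_{j=0}^n 2^j(2j+1)!\,V(n,j)\,x^j(1-x)^{n-j}$, where again the exponent in the denominator is $n$ since each term still has total degree $n$ in $y$ and $1-y$; multiplying by the prefactor $x(1+x)^{2n}$ gives $x(1+x)^n$ times the sum, which is the second formula.

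I do not expect any genuine obstacle here: the argument is a single substitution. The one point demanding care is the power-of-$(1+x)$ accounting — in particular the fact that the exponent of $1+x$ in the denominator of $\widehat{P}_{2n+1}\!\left(\frac{2x}{1+x}\right)$ is $n$ rather than $2n+1$, which is precisely what makes the final power of $1+x$ equal to $n$; miscounting the degree of the argument polynomial is the only way to go wrong.
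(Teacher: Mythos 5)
Your proposal is correct and is exactly the paper's (unwritten) argument: the corollary is obtained by plugging the two formulas of Theorem~\ref{thm02} into the identity $\widehat{R}_n(x)=x(1+x)^{n-1}\widehat{P}_n\bigl(\tfrac{2x}{1+x}\bigr)$ from Chow--Ma, using $1-\tfrac{2x}{1+x}=\tfrac{1-x}{1+x}$. Your bookkeeping of the powers of $1+x$ (denominator $(1+x)^n$ in both the even and odd cases, since each summand is homogeneous of degree $n$ in $y$ and $1-y$) is precisely right.
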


In the sequel, we discuss derivative polynomials of the tangent and secant functions.
In 1879, Andr\'e~\cite{Andre79} observed that
\begin{equation*}
 \sum_{n=0}^\infty E_n\frac{z^n}{n!}=\tan z+\sec z=1+z+\frac{z^2}{2!}+2\frac{z^3}{3!}+5\frac{z^4}{4!}+16\frac{z^5}{5!}+\cdots.
\end{equation*}
Note that
$$\sum_{n=0}^\infty E_{2n+1}\frac{z^{2n+1}}{(2n+1)!}=\tan z,~~\sum_{n=0}^\infty E_{2n}\frac{z^{2n}}{(2n)!}=\sec z.$$
For this reason the numbers $E_{2n+1}$ are sometimes called {\it tangent numbers} and the numbers $E_{2n}$ are {\it secant numbers}.
The {\it derivative polynomials}
of tangent and secant functions are respectively defined as follows:
$$\frac{\mathrm{d}^n}{\mathrm{d}\theta^n}\tan\theta=Q_n(\tan \theta),~\frac{\mathrm{d}^n}{\mathrm{d}\theta^n}\sec\theta=\sec\theta \cdot \widehat{Q}_n(\tan \theta).$$
The study of these polynomials was initiated by Knuth and Buckholtz~\cite{Knuth}.
They noted that $Q_{2n-1}(0)=E_{2n-1}$ and $\widehat{Q}_{2n}(0)=E_{2n}$.
It is well known that (see~\cite{Carlitz78,Hoffman95}):
\begin{equation*}
Q(x;z)=\sum_{n=0}^\infty Q_n(x)\frac{z^n}{n!}=\frac{x+\tan z}{1-x\tan z},
\end{equation*}
\begin{equation*}
\widehat{Q}(x;z)=\sum_{n=0}^\infty \widehat{Q}_n(x)\frac{z^n}{n!}=\frac{\sec z}{1-x\tan z}.
\end{equation*}
By the chain rule, we get $$Q_{n+1}(x)=(1+x^2)\frac{\mathrm{d}}{\mathrm{d}x}Q_n(x),~
\widehat{Q}_{n+1}(x)=(1+x^2)\frac{\mathrm{d}}{\mathrm{d}x}\widehat{Q}_n(x)+x\widehat{Q}_n(x),$$
with $Q_0(x)=x$ and $\widehat{Q}_0(x)=1$.
According to~\cite[Theorem~2]{Ma121}, for $n\geqslant 1$, we have
\begin{equation*}\label{PQ}
Q_n(x)=(x^{n-1}+x^{n+1})P_n(1+x^{-2}),~\widehat{Q}_n(x)=x^n\widehat{P}_n(1+x^{-2}).
\end{equation*}
Combining this with Corollary~\ref{Carlitz022} and Theorem~\ref{thm02}, we obtain the following result.
\begin{theorem}
For $n\geqslant 1$, we have
\begin{equation*}
\begin{split}
Q_{2n-1}(x)&=\sum_{j=1}^{n}{(-4)^{n-j}}(2j-1)!U(n,j)(1+x^2)^j,  \\
Q_{2n}(x)&=x\sum_{j=1}^{n}{(-4)^{n-j}}(2j)!U(n,j)(1+x^2)^j,\\
\widehat{Q}_{2n}(x)&=\sum_{j=0}^{n}(-1)^{n-j}{(2j)!}V(n,j)(1+x^2)^j,\\
\widehat{Q}_{2n+1}(x)&=x\sum_{j=0}^{n}(-1)^{n-j}{(2j+1)!}V(n,j)(1+x^2)^j.
\end{split}
\end{equation*}
\end{theorem}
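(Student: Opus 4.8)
The plan is to substitute the two identities recalled just above the statement, namely $Q_n(x)=(x^{n-1}+x^{n+1})P_n(1+x^{-2})$ and $\widehat{Q}_n(x)=x^n\widehat{P}_n(1+x^{-2})$, into the closed forms for $P_n(x)$ and $\widehat{P}_n(x)$ supplied by Corollary~\ref{Carlitz022} and Theorem~\ref{thm02}. The single observation driving every case is that under the change of variable $x\mapsto 1+x^{-2}$ one has $1-(1+x^{-2})=-x^{-2}$, so that a term $y^j(1-y)^{n-j}$ with $y=1+x^{-2}$ becomes $(-1)^{n-j}(1+x^2)^j\,x^{-2n}$; crucially the power of $x$ in the denominator is $x^{-2n}$ regardless of $j$, so it can be pulled outside the sum and cancelled against the monomial prefactor coming from the $Q$-identities.

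First I would handle the tangent polynomials. Write $Q_{2n-1}(x)=x^{2n-2}(1+x^2)P_{2n-1}(1+x^{-2})$ and use $xP_{2n-1}(x)=\sum_{j=1}^{n}2^{2n-2j}(2j-1)!\,U(n,j)\,x^j(1-x)^{n-j}$ from Corollary~\ref{Carlitz022}. Substituting $x\mapsto 1+x^{-2}$, writing $1+x^2=x^2(1+x^{-2})$ so as to produce the factor $xP_{2n-1}$ evaluated at $1+x^{-2}$, and simplifying, the power-of-$x$ bookkeeping collapses to $x^{2n-2}\cdot x^{2}\cdot x^{-2n}=1$, while $2^{2n-2j}(-1)^{n-j}=(-4)^{n-j}$; this yields exactly $\sum_{j=1}^{n}(-4)^{n-j}(2j-1)!\,U(n,j)(1+x^2)^j$. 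The even-index case $Q_{2n}(x)=x^{2n-1}(1+x^2)P_{2n}(1+x^{-2})$ runs word for word, the only changes being that one factor of $x$ survives outside the sum and $(2j)!$ replaces $(2j-1)!$.

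Next I would treat the secant polynomials, which are even cleaner because there is no $(1+x^2)$ prefactor. From $\widehat{Q}_{2n}(x)=x^{2n}\widehat{P}_{2n}(1+x^{-2})$ and the first identity of~\eqref{PP}, the substitution turns $(2j)!\,V(n,j)\,x^j(1-x)^{n-j}$ into $(-1)^{n-j}(2j)!\,V(n,j)(1+x^2)^j x^{-2n}$, and multiplying through by $x^{2n}$ removes the denominator and gives the stated formula. The odd case $\widehat{Q}_{2n+1}(x)=x^{2n+1}\widehat{P}_{2n+1}(1+x^{-2})$ is identical, leaving one factor of $x$ outside the sum and replacing $(2j)!$ by $(2j+1)!$.

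There is no genuine obstacle here: the argument is pure substitution, and the only place requiring care is the sign $(-1)^{n-j}$ coming from $1-(1+x^{-2})=-x^{-2}$ together with the collapse of all $x$-powers to the uniform exponent $-2n$. As an alternative one could give a self-contained induction directly from the recurrences $Q_{n+1}(x)=(1+x^2)Q_n'(x)$ and $\widehat{Q}_{n+1}(x)=(1+x^2)\widehat{Q}_n'(x)+x\widehat{Q}_n(x)$ combined with the recurrences for $U(n,k)$ and $V(n,k)$, but the substitution route is shorter and simply recycles the work already done for Corollary~\ref{Carlitz022} and Theorem~\ref{thm02}.
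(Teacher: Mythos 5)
Your proposal is correct and is essentially the paper's own argument: the paper obtains the theorem precisely by combining the identities $Q_n(x)=(x^{n-1}+x^{n+1})P_n(1+x^{-2})$ and $\widehat{Q}_n(x)=x^n\widehat{P}_n(1+x^{-2})$ with Corollary~\ref{Carlitz022} and Theorem~\ref{thm02}, exactly the substitution you carry out. Your sign and power-of-$x$ bookkeeping ($1-(1+x^{-2})=-x^{-2}$, uniform factor $x^{-2n}$, $2^{2n-2j}(-1)^{n-j}=(-4)^{n-j}$) is accurate, so nothing is missing.
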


There has been much work on special values and explicit formulas of the derivative polynomials of trigonometric functions,
see~\cite{Cvijovic09,Hoffman99} for instance.
In~\cite{Hoffman99}, Hoffman noted that $\widehat{Q}_n(1)$ are the Springer numbers of root systems of type $B_n$, which also count snakes of type $B_n$.
A {\it snake} of type $B_n$ is a signed permutation $\pi(1)\pi(2)\cdots\pi(n)$ of $B_n$ such that $0<\pi(1)>\pi(2)<\cdots\pi(n)$.
Setting $s_n=\widehat{Q}_n(1)$, then we have
$$\sum_{n=0}^{\infty}s_n\frac{z^n}{n!}=\frac{1}{\cos z-\sin z}.$$
From the above discussion, we get the following result.
\begin{corollary}
For $n\geqslant 1$, we have
\begin{equation*}
\begin{split}
E_{2n-1}&=\sum_{j=1}^{n}{(-4)^{n-j}}(2j-1)!U(n,j),~~
E_{2n}=\sum_{j=0}^{n}(-1)^{n-j}{(2j)!}V(n,j),\\
s_{2n}&=\sum_{j=0}^{n}(-1)^{n-j}{(2j)!}2^jV(n,j),~~s_{2n+1}=\sum_{j=0}^{n}(-1)^{n-j}{(2j+1)!}2^jV(n,j).
\end{split}
\end{equation*}
\end{corollary}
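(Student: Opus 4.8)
The plan is to obtain all four identities as immediate specializations of the explicit formulas for the derivative polynomials $Q_n(x)$ and $\widehat{Q}_n(x)$ established in the preceding theorem. The guiding observation is that every summand on the right-hand side of those formulas carries a factor $(1+x^2)^j$, and in the two odd cases an additional leading factor $x$; hence evaluating at $x=0$ collapses each $(1+x^2)^j$ to $1$, while evaluating at $x=1$ collapses it to $2^j$.

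First I would recall the Knuth--Buckholtz observations stated above, namely $Q_{2n-1}(0)=E_{2n-1}$ and $\widehat{Q}_{2n}(0)=E_{2n}$. Substituting $x=0$ into the formula $Q_{2n-1}(x)=\sum_{j=1}^{n}(-4)^{n-j}(2j-1)!\,U(n,j)(1+x^2)^j$ kills every $(1+x^2)^j$ and yields $E_{2n-1}=\sum_{j=1}^{n}(-4)^{n-j}(2j-1)!\,U(n,j)$. Likewise, substituting $x=0$ into $\widehat{Q}_{2n}(x)=\sum_{j=0}^{n}(-1)^{n-j}(2j)!\,V(n,j)(1+x^2)^j$ gives $E_{2n}=\sum_{j=0}^{n}(-1)^{n-j}(2j)!\,V(n,j)$.

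Next, for the Springer numbers of type $B_n$, I would use the relation $s_n=\widehat{Q}_n(1)$ recorded above. Setting $x=1$ turns each $(1+x^2)^j$ into $2^j$; the even case $\widehat{Q}_{2n}(1)$ then produces $s_{2n}=\sum_{j=0}^{n}(-1)^{n-j}(2j)!\,2^j V(n,j)$, and in the odd case the extra leading factor $x$ in $\widehat{Q}_{2n+1}(x)=x\sum_{j=0}^{n}(-1)^{n-j}(2j+1)!\,V(n,j)(1+x^2)^j$ evaluates to $1$, yielding $s_{2n+1}=\sum_{j=0}^{n}(-1)^{n-j}(2j+1)!\,2^j V(n,j)$.

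Since each step is a single substitution, there is essentially no obstacle; the only points needing a moment's care are the bookkeeping of the summation ranges ($j$ from $1$ to $n$ in the $U$-identity, $j$ from $0$ to $n$ in the $V$-identities) and a sanity check that the parity conventions for tangent and secant numbers agree with those used in the derivative-polynomial theorem. For instance, the $j=0$ term in the $V$-sums contributes $V(n,0)=1$ by the recurrence~\eqref{Vnk-recu}, which is consistent with $E_0=1$ and $s_0=s_1=1$, and the small cases $n=1$ (giving $E_1=1$, $E_2=1$) match the known values.
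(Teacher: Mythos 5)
Your proposal is correct and follows exactly the route the paper intends: the corollary is stated as an immediate consequence ("from the above discussion"), namely evaluating the explicit formulas for $Q_{2n-1}(x)$, $\widehat{Q}_{2n}(x)$ and $\widehat{Q}_{2n+1}(x)$ at $x=0$ (using $Q_{2n-1}(0)=E_{2n-1}$, $\widehat{Q}_{2n}(0)=E_{2n}$) and at $x=1$ (using $s_n=\widehat{Q}_n(1)$). Your bookkeeping of the summation ranges and the sanity checks on small cases are consistent with the paper's conventions.
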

\subsection{The proof of Theorem~\ref{Mainthm01}}\label{proof}
\hspace*{\parindent}

Along the same lines as in~\cite{Carlitz80}, setting $x=\cos 2\theta$ and replacing $z$ by $2z$ in~\eqref{CarlitzGF}, we get
\begin{equation*}
 \sum_{n=0}^\infty (\sin 2\theta)^{-n}2^n\frac{z^n}{n!}\sum_{k=0}^nR(n+1,k){\cos^{n-k} 2\theta}=
   \tan^2 \theta \cot^2(\theta-z).
\end{equation*}
Thus by replacing $z$ by $-z$, we obtain
\begin{align*}
&\sum_{n=0}^\infty (\sin 2\theta)^{-n}2^n(-1)^n \frac{z^n}{n!}\sum_{k=0}^nR(n+1,k){\cos^{n-k} 2\theta}\\
&=\tan^2 \theta \cot^2(\theta+z)\\
&=\tan^2 \theta \csc^2 (\theta+z) -\tan^2\theta.
\end{align*}
By the Taylor's theorem, it is easy to derive that
\begin{equation}\label{calitzcsc}
(-1)^{n} 2^{n}(\sin 2 \theta)^{-n} \sum_{k=0}^{n} R(n+1, k)\cos^{n-k} 2 \theta=\tan ^{2} \theta \frac{\mathrm{d}^{n}}{\mathrm{d}\theta^{n}} \csc ^{2} \theta
\end{equation}
Carlitz deduced the following result by a simple induction.
\begin{lemma}[{\cite[Eqs~(2.11),~(2.12)]{Carlitz80}}]\label{Lemma80}
For $n\geqslant 1$, one has
\begin{equation*}
\frac{{\mathrm{d}}^{2n-2}}{{\mathrm{d}}\theta^{2n-2}}\csc^{2}\theta=\sum_{j=1}^{n}{(-1)^{n-j}2^{2n-2j}(2j-1)!U({n,j})\csc^{2j}\theta},
\end{equation*}
\begin{equation*}
\frac{{\mathrm{d}}^{2n-1}}{{\mathrm{d}}\theta^{2n-1}}\csc^{2}\theta=\sum_{j=1}^{n}{(-1)^{n-j+1}2^{2n-2j}(2j)!U({n,j})\csc^{2j}\theta}\cot\theta,
\end{equation*}
\end{lemma}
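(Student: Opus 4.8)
The plan is to prove the two identities simultaneously by induction on $n$, using the observation that one differentiation carries the family $\{\csc^{2j}\theta : j\geqslant 1\}$ into the family $\{\csc^{2j}\theta\cot\theta : j\geqslant 1\}$ and back again. The only elementary facts needed are
\begin{equation*}
\frac{\mathrm{d}}{\mathrm{d}\theta}\csc^{2j}\theta=-2j\csc^{2j}\theta\cot\theta,\qquad\frac{\mathrm{d}}{\mathrm{d}\theta}\left(\csc^{2j}\theta\cot\theta\right)=2j\csc^{2j}\theta-(2j+1)\csc^{2j+2}\theta,
\end{equation*}
where the second comes from $\frac{\mathrm{d}}{\mathrm{d}\theta}\cot\theta=-\csc^{2}\theta$ together with $\cot^{2}\theta=\csc^{2}\theta-1$. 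For the base case $n=1$, both sides of the first identity equal $\csc^{2}\theta$ and both sides of the second equal $-2\csc^{2}\theta\cot\theta$, using $U(1,1)=1$.

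For the inductive step, assume the first identity holds for a given $n$. Differentiating it once and applying the first elementary fact termwise replaces each constant $2^{2n-2j}(2j-1)!$ by $2^{2n-2j}(2j)!$ and flips the overall sign, which is exactly the second identity for the same $n$; thus the odd-order formula follows for free from the even-order one. Now differentiate the second identity for $n$ and apply the second elementary fact termwise. This produces two sums, one supported on $\csc^{2j+2}\theta$ and one on $\csc^{2j}\theta$. Reindexing the first sum by $j\mapsto j-1$ and using $(2j)!\cdot 2j=4j^{2}(2j-1)!$ to merge constants in the second, the total coefficient of $\csc^{2j}\theta$ becomes $(-1)^{n+1-j}2^{2(n+1)-2j}(2j-1)!\bigl(U(n,j-1)+j^{2}U(n,j)\bigr)$.

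The final step is to invoke the recurrence $U(n+1,j)=U(n,j-1)+j^{2}U(n,j)$, which collapses the bracket to $U(n+1,j)$ and yields precisely the first identity for $n+1$. The endpoint terms $j=1$ and $j=n+1$ each receive a contribution from only one of the two sums, but the conventions $U(n,0)=0$ and $U(n+1,n+1)=1$ (both immediate from the recurrence, as is $U(n,1)=1$) make these agree with the claimed formula, so no separate boundary argument is needed. The only point demanding care is the bookkeeping of the index shift together with the sign identity $(-1)^{n-j+1}=(-1)^{n+1-j}$; beyond this routine accounting I do not expect any genuine obstacle.
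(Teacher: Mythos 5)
Your proof is correct: the two differentiation rules, the sign/index bookkeeping, and the use of the recurrence $U(n+1,j)=U(n,j-1)+j^{2}U(n,j)$ all check out, and the boundary cases are handled properly by the conventions $U(n,0)=0$ and $U(n,n)=1$. The paper itself does not prove this lemma (it cites Carlitz and notes it follows ``by a simple induction''), and your argument is exactly that induction with the details supplied, so it matches the intended approach.
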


\noindent{\bf A proof
Theorem~\ref{Mainthm01}:}
The proof of the expression of $\sum_{s=0}^{2n-2} R(2n-1, s) x ^{2n-2-s}$ is the same as in~\cite{Carlitz80}. However, we give a proof of it for completeness.
Replacing $n$ by $2n-2$, then~\eqref{calitzcsc} becomes
$$2^{2n-2}(\sin 2 \theta)^{-2n+2} \sum_{s=0}^{2n-2} R(2n-1, s) \cos ^{2n-2-s} 2\theta=\tan ^{2} \theta \frac{\mathrm{d}^{2n-2}}{\mathrm{d}\theta^{2n-2}} \csc ^{2} \theta.$$
Combining Lemma~\ref{Lemma80} and the double angle formula $\sin 2\theta=2\sin \theta\cos \theta$, we get
\begin{equation*}
\begin{split}
&\sum_{s=0}^{2n-2} R(2n-1, s) \cos ^{2n-2-s} 2\theta\\
&=2^{2-2n}(\sin 2 \theta)^{2n-2}\tan ^{2} \theta \frac{\mathrm{d}^{2n-2}}{\mathrm{d} \theta^{n}} \csc ^{2} \theta\\
&=\sum_{j=1}^{n}{(-1)^{n-j}2^{2n-2j}(2j-1)!U({n,j})\sin ^{2n-2j}\theta \cos^{2n-4}\theta}.
\end{split}
\end{equation*}
Since $\sin^2\theta=\frac{1-\cos 2\theta}{2},~\cos^2\theta=\frac{1+\cos 2\theta}{2}$,
we obtain
\begin{equation*}
\begin{split}
&\sum_{s=0}^{2n-2} R(2n-1, s) \cos^{2n-2-s} 2\theta\\
&=\sum_{j=1}^{n}{(-1)^{n-j}2^{2-j}(2j-1)!U({n,j})}(1-\cos 2\theta)^{n-j}(1+\cos 2\theta)^{n-2}.
\end{split}
\end{equation*}
Setting $x=\cos 2\theta$, we get the expression of $\sum_{s=0}^{2n-2} R(2n-1, s) x ^{2n-2-s}$.

Similarly, replacing $n$ by $2n-1$, the identity~\eqref{calitzcsc} becomes
$$\sum_{s=0}^{2n-1} R(2n, s) \cos ^{2n-1-s} 2 \theta=(-1)2^{1-2n}(\sin 2 \theta)^{2n-1}\tan ^{2} \theta \frac{\mathrm{d}^{2n-1}}{\mathrm{d} \theta^{2n-1}} \csc ^{2} \theta.$$
Then combining Lemma~\ref{Lemma80} and double angle formulas, we get
\begin{equation*}
\begin{split}
&\sum_{s=0}^{2n-1} R(2n, s) \cos ^{2n-1-s} 2 \theta\\
&=\sum_{j=1}^{n}{(-1)^{n-j}2^{2n-2j}(2j)!U({n,j})\sin ^{2n-2j}\theta\cos^{2n-2} \theta}\\
&=\sum_{j=1}^{n}{(-1)^{n-j}2^{1-j}(2j)!U({n,j})(1-\cos 2\theta)^{n-j}(1+\cos 2\theta)^{n-1}}
\end{split}
\end{equation*}
Setting $x=\cos 2\theta$, we get the expression of $\sum_{s=0}^{2n-1} R(2n, s) x ^{2n-1-s}$. This completes the proof. \qed
\section*{Acknowledgements.}
This work is supported by NSFC (12071063).

\end{document}